\RequirePackage[l2tabu, orthodox]{nag}
\documentclass[preprint,  11pt]{amsart}

\usepackage{fullpage}
\usepackage{xcolor}
\usepackage{lipsum}
\usepackage{bbm}
\usepackage{tikz-network}
\usepackage[square,numbers]{natbib}
\usepackage{libertinus}

\usepackage[T1]{fontenc}
\usepackage{bm}
\usepackage{pdfpages}
\usepackage{pgfplots}
\pgfplotsset{compat=1.17}
\usetikzlibrary{positioning}

\usepackage{gfsartemisia}
\usepackage[T1]{fontenc}
\usepackage{subfigure}

\linespread{1.28}

\usepackage{amsfonts}
\usepackage{amssymb}
\usepackage{amsmath}
\usepackage{amsthm}
\usepackage{amsbsy}
\usepackage{amssymb} 
\usepackage{verbatim}
\usepackage{bm} 
\usepackage{paralist} 
 \usepackage{color}
 \usepackage{mathrsfs}
 \usepackage{graphicx}
\usepackage{hyperref}

\newcommand{\E}{\mathbf{E}}
\def\P{\mathbf{P}}











\def\l{\left}
\def\r{\right}
\def\<{\langle}
\def\>{\rangle}


\newcommand{\ba}{\[\begin{aligned}}
\newcommand{\ea}{\end{aligned}\]}
\newcommand\mnote[1]{} 
\newcommand{\beq}[1]{\begin{equation}\label{#1}}
\newcommand\eeq{\end{equation}}
\newcommand\ben{\begin{equation}}
\newcommand\een{\end{equation}}
\newcommand\bes{\begin{eqnarray*}}
\newcommand\ees{\end{eqnarray*}}
\newcommand\besn{\begin{eqnarray}}
\newcommand\eesn{\end{eqnarray}}

\def\bthm{\begin{theorem}}
\def\ethm{\end{theorem}}
\def\bdefn{\begin{definition}}
\def\edefn{\end{definition}}
\newcommand{\benu}{\begin{enumerate}\setlength\itemsep{6pt}}
\newcommand{\beit}{\begin{itemize}\setlength\itemsep{3pt}}
\def\eenu{\end{enumerate}}
\def\eeit{\end{itemize}}
\def\beds{\begin{description}}
\def\eeds{\end{description}}
\def\bepr{\begin{problem}}
\def\eepr{\end{problem}}

\def\bprf{\begin{proof}}
\def\eprf{\end{proof}}
\def\berk{\begin{remark}}
\def\eerk{\end{remark}}
\def\bex{\begin{exercise}}
\def\eex{\end{exercise}}
\def\beg{\begin{example}}
\def\eeg{\end{example}}




\def\N{\mathbb{N}}
\def\R{\mathbb{R}}




\newcommand{\sm}{{\raise0.3ex\hbox{$\scriptstyle \setminus$}}}



\renewcommand\phi{\varphi}


\theoremstyle{plain} 
    \newtheorem{theorem}{Theorem}
    
    \newtheorem{proposition}{Proposition}
    \newtheorem{corollary}{Corollary}
    
    \newtheorem{conjecture}{Conjecture}

\theoremstyle{definition} 
    \newtheorem{definition}[theorem]{Definition}

    \newtheorem{exercise}[theorem]{Exercise}
    \newtheorem{problem}[theorem]{Problem}
        \newtheorem{remark}{Remark}
    \newtheorem{example}[theorem]{Example}

\renewcommand\P{\mathbb P}
\renewcommand\E{\mathbb E}

\renewcommand{\bex}{\indent\begin{exercise}}

\openup 0.4em

\hypersetup{
    colorlinks=true, 
    linktoc=all,     
    linkcolor=blue,  
}
\setcounter{tocdepth}{1}

\begin{document}
\title{Stochastic domination in beta ensembles}
\author{Jnaneshwar Baslingker}

\begin{abstract}
    We give a stochastic comparison and ordering of the largest eigenvalues, with parameter $\beta$, for Hermite $\beta$-ensembles and Laguerre $\beta$-ensembles. Although stochastic comparison results are well known in Laguerre ensembles (for $\beta=1,2,4$) using the last passage percolation models, our results are novel even for $\beta=1,2,4$, in Hermite ensembles. Taking limit, we recover a stochastic domination result for Tracy-Widom distributions obtained in \cite{VP22}. Using this, we also obtain a result on the signs of means of Tracy-Widom distributions. Our methods also provide stochastic domination results for spiked beta ensembles as well. We compare ordering of all the eigenvalues collectively, with $\beta$ as a parameter, by proving ordering of the moments of Hermite and Laguerre $\beta$-ensembles.
    
    In order to generalize the stochastic domination results of \cite{VP22} to higher order analogues of Tracy-Widom distributions, we study tail estimates of these distributions. We show that the description of these distributions as eigenvalues of a stochastic operator is inconsistent with the known tail estimates. As a result, we disprove a conjecture of \cite{KRV13} for $\beta=2$ and $k=1$. 
\end{abstract}

\address{Department of Mathematics, Indian Institute of Science, Bangalore, India}
 \email{jnaneshwarb@iisc.ac.in}
\keywords{ Stochastic domination, Random matrices}
\subjclass[2010]{60B20, 60B05}

\maketitle

\section{Introduction and main results}

The eigenvalues of Gaussian matrices (GUE/GOE) and Wishart matrices (LUE/LOE) have been extensively studied. These eigenvalue ensembles are special cases of a general class of point process called the $\beta$-ensembles (for a real parameter $\beta>0$). In this article we consider Hermite and Laguerre $\beta$-ensembles which are the most well-studied random matrix models over the last few decades \cite{BF97,DE02,ES07,KJ,RRV11}.

For any real $\beta>0$, consider the probability density function of $\lambda_1\geq \lambda_2\geq\dots\geq\lambda_n\in \mathbb{R}$ given by 
\begin{align}\label{Hermite density}
    \mathbb{P}_{n}^{\beta}\left(\lambda_1,\lambda_2,\dots,\lambda_n\right)=\frac{1}{Z_n^{\beta}}\prod_{k=1}^{n}e^{-\frac{\beta}{4}\lambda_k^2}\times\prod\limits_{j<k}\left|\lambda_j-\lambda_k\right| ^{\beta}.
\end{align}

For $\beta=1,2$ or $4$, this distribution corresponds to the joint density of eigenvalues of $n\times n$ Gaussian orthogonal, unitary or symplectic matrices, G(O/U/S)E of (for general $\beta$ this density is also known as Hermite $\beta$-ensemble) random matrix theory. For $\kappa\in\mathbb{R}$ with $\kappa>n-1$, consider the following joint density on points $\lambda_1\geq\lambda_2\geq\dots\geq\lambda_n\geq 0$,
\begin{align}\label{Laguerre density}
    \mathbb{P}_{n,\kappa}^{\beta}\left(\lambda_1,\lambda_2,\dots,\lambda_n\right)=\frac{1}{Z_{n,\kappa}^{\beta}}\prod_{k=1}^{n}\lambda_k^{\frac{\beta}{2}(\kappa-n+1)-1}e^{-\frac{\beta}{2}\lambda_k}\times \prod\limits_{j<k}\left|\lambda_j-\lambda_k\right| ^{\beta}.
\end{align}

When $\kappa$ is an integer and $\beta=1,2$ or $4$, this distribution corresponds to the joint density of eigenvalues of  Laguerre orthogonal, unitary or symplectic matrices, L(O/U/S)E of random matrix theory (for general $\beta$ this density is also known as Laguerre $\beta$-ensemble)\cite{DE02}.

The limiting distribution of the largest eigenvalues of Hermite $\beta$-ensembles \eqref{Hermite density} and Laguerre $\beta$-ensembles \eqref{Laguerre density} are some of the most celebrated results in random matrix theory due to their connections to physics \cite{PS02}, combinatorics \cite{BDJ99}, statistics \cite{J08} and applied probability \cite{KJ, TW09}. Let $\lambda_1^{(n,H,\beta)}$ and $\lambda_1^{(n,\kappa,L,\beta)}$ denote $\lambda_1$ in \eqref{Hermite density} and in \eqref{Laguerre density} respectively. Define
\begin{align*}
    H_{n,\beta}&:=\left(\frac{\lambda_1^{(n,H,\beta)}}{\sqrt{ n}}-2\right)n^{2/3},\\
    L_{n,\kappa,\beta}&:=\l(\sqrt{\kappa n}\r)^{1/3}{\left(\sqrt{\kappa}+\sqrt{n}\right)^{2/3}}\left(\frac{\lambda_1^{(n,\kappa, L,\beta)}}{\left(\sqrt{\kappa}+\sqrt{n}\right)^{2}}-1\right).
\end{align*}

Ram\'{i}rez, Rider and Vir\'{a}g show in \cite{RRV11} that for $\beta>0$, if $n\rightarrow \infty$, then $H_{n,\beta}$ converges weakly to a non-trivial distribution known as Tracy-Widom distribution with parameter $\beta$, denoted as $TW_{\beta}$. They also show that for $\beta>0$, if $n\rightarrow \infty$ with $\kappa=\kappa_n>n-1$, then $L_{n,\kappa,\beta}$ converges in distribution to $TW_{\beta}$.

\subsection{Stochastic domination in beta ensembles:}
Despite the ubiquity of $TW_{\beta}$ distributions, very few properties are known about them.
In a recent work, Virginia Pedreira \cite{VP22} showed that if $s\in\left [ 1/3, 2/3 \right]$ and $\beta_2>\beta_1$ then  $\beta_1^{s}TW_{\beta_1} \ {\succeq} \ \beta_2^{s}TW_{\beta_2}$ which shows the stochastic domination of $TW_{\beta_1}$ over $TW_{\beta_2}$ modulo a multiplicative constant. The above stochastic domination result was obtained using the description of $TW_{\beta}$ as the distribution of the negative of the smallest eigenvalue  of stochastic Airy operator mentioned in \cite{RRV11}. 

As $H_{n,\beta}$ and $L_{n,\kappa,\beta}$ both converge weakly to $TW_{\beta}$, it is a natural question whether there exists a stochastic domination result similar to that of \cite{VP22} for $H_{n,\beta}$ and $L_{n,\kappa,\beta}$. Our first main result is following theorem for Hermite and Laguerre $\beta$-ensembles.
\begin{theorem}\label{Stoch dom}
Let $\frac{\beta_2}{\beta_1}=\frac{m}{n}\in \mathbb{Q}$ and $\beta_2>\beta_1$. Let $s=2/3$. Then
\begin{align}
    \beta_1^s\  H_{m,\beta_1}&\ \succeq\  \beta_2^s\  H_{n,\beta_2}. \label{Hermite} \\
     \beta_1^s\  L_{m,m\kappa/n,\beta_1}&\ \succeq\  \beta_2^s\  L_{n,\kappa,\beta_2}\label{Laguerre}.
\end{align}
\end{theorem}

As a corollary of Theorem \ref{Stoch dom}, we recover the following result of Pedreira \cite{VP22} for $s=2/3$.

\begin{corollary}\label{Pedreira result}
Let $s=2/3$ and $\beta_2>\beta_1$. Then $\beta_1^sTW_{\beta_1}\ \succeq\ \beta_2^sTW_{\beta_2}$.
\end{corollary}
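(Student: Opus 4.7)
The strategy is to deduce the corollary from Theorem \ref{Stoch dom} by a two-stage limiting argument: first for $\beta_2/\beta_1 \in \mathbb{Q}$, using the convergence $H_{n,\beta} \convd TW_\beta$ from \cite{RRV}, and then for a general ratio by a density argument combined with continuity of $\beta \mapsto TW_\beta$.

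First I would handle the rational case. Write $\beta_2/\beta_1 = p/q$ in lowest terms with $p > q \geq 1$. For every integer $k \geq 1$, the pair $(m,n) = (kp, kq)$ satisfies $\beta_2/\beta_1 = m/n$, so Theorem \ref{Stoch dom} gives
\[
\beta_1^{2/3}\, H_{kp,\beta_1} \ \succeq\ \beta_2^{2/3}\, H_{kq,\beta_2}.
\]
As $k \to \infty$, the result of \cite{RRV} recalled in the introduction yields $H_{kp,\beta_1} \convd TW_{\beta_1}$ and $H_{kq,\beta_2} \convd TW_{\beta_2}$. Stochastic domination is closed under weak limits of both sides: the inequality $\P(X_k > t) \geq \P(Y_k > t)$ passes to every common continuity point of the limit laws, and continuity points are dense. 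Hence $\beta_1^{2/3} TW_{\beta_1} \succeq \beta_2^{2/3} TW_{\beta_2}$ whenever the ratio is rational.

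Second, for general $\beta_2 > \beta_1 > 0$ I would approximate. Since $\{q\beta_1 : q \in \mathbb{Q}\}$ is dense in $\mathbb{R}$, choose $\beta_2^{(k)} > \beta_1$ with $\beta_2^{(k)}/\beta_1 \in \mathbb{Q}$ and $\beta_2^{(k)} \to \beta_2$. The rational case furnishes $\beta_1^{2/3} TW_{\beta_1} \succeq (\beta_2^{(k)})^{2/3} TW_{\beta_2^{(k)}}$ for every $k$. The stochastic Airy operator of \cite{RRV}, namely $-\frac{d^{2}}{dx^{2}} + x + \frac{2}{\sqrt{\beta}} B'$ on its natural domain with $B$ a fixed Brownian motion, depends continuously on $\beta$, so its ground-state eigenvalue does too; in particular $TW_{\beta_2^{(k)}} \convd TW_{\beta_2}$. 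Passing to the limit once more preserves stochastic domination and yields the corollary.

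The argument is essentially plug-and-play given Theorem \ref{Stoch dom}, so the only ingredient that needs a little care is the weak continuity of $\beta \mapsto TW_\beta$; this follows from the operator-theoretic picture of \cite{RRV}, but it is the one step worth stating and citing explicitly. No genuinely new obstacle appears.
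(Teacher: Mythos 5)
Your proposal is correct and takes essentially the same route as the paper: both arguments pass to the limit $n\to\infty$ in Theorem \ref{Stoch dom} (using $H_{n,\beta}\convd TW_\beta$ from \cite{RRV}) to obtain the inequality for rational ratios $\beta_2/\beta_1$, and then remove the rationality restriction via density of $\mathbb{Q}$ together with weak continuity of $\beta\mapsto TW_\beta$. The paper compresses these two limiting steps into a single sentence; you separate them and make the needed closure of stochastic domination under weak limits explicit, but the underlying argument is identical.
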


\begin{remark}
Although \eqref{Laguerre} was known for $\beta=1,2,4$, due to the connection to LPP models, the result \eqref{Hermite} is novel even for $\beta=1,2,4$. As Theorem \ref{Stoch dom} is for finite $n$, it strengthens the main result of \cite{VP22} for $s=2/3$. Note that it is enough to prove the main result of \cite{VP22} for $s\in\l\{\frac{1}{3},\frac{2}{3}\r\}$. It would be interesting to know if a similar result would also hold for $s=1/3$ for finite $n$ in either Hermite $\beta$-ensemble or Laguerre $\beta$-ensemble. This would strengthen the main result of \cite{VP22} for $s=1/3$. It would also be interesting to know if such stochastic domination results hold for $j$-th largest eigenvalue $\lambda_j$ (our method works only for $\lambda_1$). This would allow us to obtain stochastic ordering of limiting distributions of $\lambda_j$ obtained in \cite{VP22}.
\end{remark}

Using stochastic domination of $TW_{\beta}$, we also prove the following qualitative result about the means of these distributions. 

\begin{theorem}\label{thm: TW neg means}
    For all $\beta\geq 1$, we have $\mathbb{E}[TW_{\beta}]<0$. Further, there exists some $0<a<1$ such that for all $\beta<a$, the means of $TW_{\beta}$ are non-negative.
\end{theorem}

 We prove Theorem \ref{Stoch dom} using the tridiagonal matrix model for \eqref{Hermite density}, \eqref{Laguerre density}. As the proof of Theorem \ref{Stoch dom} uses only the tridiagonal matrix model, this makes the proof stronger than that of \cite{VP22}, which uses the stochastic Airy operator and the variational characterization of its eigenvalues. Also our method can be applied to more general random tridiagonal matrix models, without knowing the exact description of limiting distributions of fluctuations of largest eigenvalue.

Dumitriu and Edelman \cite{DE02} gave a tridiagonal matrix model for \eqref{Hermite density} and \eqref{Laguerre density}. Define 
\begin{align}\label{eq: tridiag model}
 T_{n,\beta}:=\frac{1}{\sqrt{\beta}}\begin{bmatrix}
X_1 & Y_1 & 0 &\cdots 0\\
Y_1 & X_2 & Y_2 & \cdots 0\\
0 & Y_2 & X_3 &\cdots 0\\
\vdots & \vdots & \vdots &\vdots
\end{bmatrix}_{n\times n}
    \hspace{2cm}
B_{n,\kappa,\beta}:=\frac{1}{\sqrt{\beta}}\begin{bmatrix}
Z_1 & 0 & 0 &\cdots 0\\
W_1 & Z_2 & 0 & \cdots 0\\
0 & W_2 & Z_3 &\cdots 0\\
\vdots & \vdots & \vdots &\vdots
\end{bmatrix}_{n\times n}.
\end{align}
$T_{n,\beta}$ is the symmetric tridiagonal matrix with $X_i\sim N(0,2)$ and $Y_i\sim \chi_{\beta(n-i)}$ and the entries are independent up to symmetry.
 $B_{n,\kappa, \beta}$ is the bidiagonal matrix  with $Z_i\sim \chi_{\beta(\kappa-i+1)}$ and $W_i\sim \chi_{\beta(n-i)}$, where $\kappa\in \mathbb{R}$ and $\kappa+1>n$ and all the entries are independent. It is shown in \cite{DE02} that the $n$ eigenvalues of $T_{n,\beta}$ have the joint law given by \eqref{Hermite density} and the eigenvalues of $B^t_{n,\kappa, \beta}B_{n,\kappa, \beta}$ have the joint density given by \eqref{Laguerre density}. In particular, if $\lambda_1(A)$ denotes the largest eigenvalue of a matrix $A$, 
 \begin{align*}
     \lambda_1(T_{n,\beta})\overset{d}{=}\lambda_1^{(n,H,\beta)}\quad  \quad\quad \lambda_1(B^t_{n,\kappa, \beta}B_{n,\kappa, \beta})\overset{d}{=}\lambda_1^{(n,\kappa,L,\beta)}.
 \end{align*} 
We then prove the stochastic domination results for these largest eigenvalues of tridiagonal matrices. We describe now how our method allows stochastic comparison in spiked random matrix models as well.

\subsection{Stochastic domination in spiked beta ensembles:}
The main result of \cite{VP22} was obtained using the stochastic Airy operator (SAO) and the variational characterization of its eigenvalues. SAO is the random  operator: 
\begin{align*}
    S_{\beta}:=-\frac{d^2}{dx^2}+x+\frac{2}{\sqrt{\beta}}b_x'
\end{align*}
where $b'$ is the white noise. This operator is defined on the Hilbert space of continuous functions $f$ such that $f(0)=0$ and $\int_0^{\infty}(f'(x))^2+(1+x)f^2(x)dx<\infty$ (see \cite{RRV11}). Taking $\Lambda_0$ as the smallest eigenvalue, $TW_{\beta}$ is the distribution of $-\Lambda_0$. Similarly the spiked SAO $S^{(w)}_{\beta}$, which is same as $S_{\beta}$ but with boundary condition $f'(0)=wf(0)$ has been studied in the work of Blomendal and Vir\'ag \cite{BV12}. For finite $w$, the distribution of the negative of the smallest eigenvalue of these operators gives a family of distributions denoted as $TW_{\beta}^{(w)}$, which are deformations of $TW_{\beta}$ and $w=\infty$ corresponds to the Dirichlet boundary condition $f(0)=0$. Carrying out the same analysis of \cite{VP22}, one can obtain stochastic comparison for $TW_{\beta}^{(w)}$ also. 

 Consider the random matrices 
 \begin{align}\label{eq:spiked tridiag model}
 T_{n,\beta}^{(\ell_n)}:=\frac{1}{\sqrt{\beta}}\begin{bmatrix}
X_1+ \sqrt{\beta n}\ell_n & Y_1 & 0 &\cdots 0\\
Y_1 & X_2 & Y_2 & \cdots 0\\
0 & Y_2 & X_3 &\cdots 0\\
\vdots & \vdots & \vdots &\vdots
\end{bmatrix}_{n\times n}
    \hspace{1cm}
B_{n,\kappa,\beta}^{(\ell_{n,\kappa})}:=\frac{1}{\sqrt{\beta}}\begin{bmatrix}
\sqrt{\ell_{n,\kappa}}Z_1 & 0 & 0 &\cdots 0\\
W_1 & Z_2 & 0 & \cdots 0\\
0 & W_2 & Z_3 &\cdots 0\\
\vdots & \vdots & \vdots &\vdots
\end{bmatrix}_{n\times n}
\end{align}
where all the random variables are same as in the definition of $T_{n,\beta}$ and $B_{n,\kappa,\beta}$ in \eqref{eq: tridiag model} and 
\begin{align*}
    \lim_{n\rightarrow\infty}n^{1/3}\l(1-\ell_n\r)&=w\in (-\infty,\infty]\\
\lim_{n\rightarrow\infty}\l(\kappa^{-1/2}+n^{-1/2}\r)^{-2/3}\l(1-\sqrt{\kappa/n}\l(\ell_{n,\kappa}-1\r)\r)&=w\in (-\infty,\infty] \mbox{ with  } \kappa>n-1.
\end{align*}
In \cite{BV12}, it is shown that for all $\beta>0$, as $n\rightarrow\infty$
\begin{align*}
H_{n,\beta}^{(\ell_n)}:=\l(\frac{\lambda_1\l(T_{n,\beta}^{(\ell_n)}\r)}{\sqrt{n}}-2\r)n^{2/3}\rightarrow TW_{\beta}^{(w)}\\
L_{n,\kappa,\beta}^{(\ell_{n,\kappa})}:=\l(\sqrt{\kappa n}\r)^{1/3}{\left(\sqrt{\kappa}+\sqrt{n}\right)^{2/3}}\left(\frac{\lambda_1\l(\l({B_{n,\kappa, \beta}^{(\ell_{n,\kappa})}}\r)^t\l({B_{n,\kappa, \beta}^{(\ell_{n,\kappa})}}\r)\r)}{\left(\sqrt{\kappa}+\sqrt{n}\right)^{2}}-1\right)\rightarrow TW_{\beta}^{(w)}.
\end{align*} 
The proof method of Theorem \ref{Stoch dom} can be followed verbatim to obtain the following result.
\begin{theorem}\label{Stoch dom spiked}
Let $\frac{\beta_2}{\beta_1}=\frac{m}{n}\in \mathbb{Q}$ and $\beta_2>\beta_1$. Let $s=2/3$ and $\ell_n,\ell_{n,\kappa}$ be non-decreasing. Then
\begin{align*}
    \beta_1^s\  H^{\ell_n}_{m,\beta_1}&\ \succeq\  \beta_2^s\  H^{\ell_n}_{n,\beta_2}.  \\
     \beta_1^s\  L^{\ell_{n,\kappa}}_{m,m\kappa/n,\beta_1}&\ \succeq\  \beta_2^s\  L^{\ell_{n,\kappa}}_{n,\kappa,\beta_2}.
\end{align*}
\end{theorem}

\subsection{Motivation and known results:}

The stochastic domination results for Laguerre ensemble ($\beta=1,2,4$), using the connection to last passage percolation (LPP) models, are well known. Due to the work of Johannson \cite{KJ}, Baik and Rains \cite{BR01}, \cite{Baik}, there are remarkable bijections between the passage times of a few last passage percolation (LPP) models and the largest eigenvalues of Laguerre ensembles. Stochastic domination results are immediate for $\beta=1,2,4$ in Laguerre ensemble using these connections, as shown below. But there are no known probabilistic interpretations in LPP models for $\beta\notin\{1,2,4\}$. Also in the case of Hermite ensemble, except for $\beta=2$, there are no known last passage interpretations of the largest eigenvalues.  Theorem \ref{Stoch dom} generalizes these stochastic domination results from last passage percolation to all $\beta>0$ and also introduce new stochastic domination results in the case of Hermite ensembles.

Consider the exponential LPP model on $\mathbb{Z}^2$ where the field of vertex weights $\{\zeta_v\}_{v\in\mathbb{Z}^2}$ is a family of i.i.d. rate one exponentially distributed random variables. We shall define
\begin{align*}
G^{\scalebox{1.5}{$\Box$}}_{u,v}:=\max\limits_{\gamma}\ell(\gamma),
\end{align*}
where $\gamma$ are the up/right oriented paths in $\mathbb{Z}^2$ from $u\in\mathbb{Z}^2$ to $v\in\mathbb{Z}^2$ and $\ell(\gamma)=\sum\limits_{v\in \gamma}\zeta_v$. Let $\mathbf{1}=(1,1)\in \mathbb{Z}^2$, $\mathbf{n}=(n,n)\in \mathbb{Z}^2$. Define 
\begin{align*}
G^{\scalebox{1.5}{$\boxbslash$}}_{\mathbf{1},\mathbf{n}}:=\sup\limits_{\mathbf{u}\in\mathbb{Z}_+^2:u_1+u_2=2n}G^{\scalebox{1.5}{$\Box$}}_{\mathbf{1},\mathbf{u}}.     
\end{align*}
Using the relation $G^{\scalebox{1.5}{$\Box$}}_{\mathbf{1},\mathbf{n}}\overset{d}{=}\lambda_{1}^{(n,n,L,2)}$ \cite{KJ} and $G^{\scalebox{1.5}{$\boxbslash$}}_{\mathbf{1},\mathbf{n}}\overset{d}{=}\frac{1}{2}\lambda_{1}^{(2n-1,2n,L,1)}$ (Proposition $1.3$, \cite{BGHK}) and $G^{\scalebox{1.5}{$\boxbslash$}}_{\mathbf{1},\mathbf{n}}\succeq G^{\scalebox{1.5}{$\Box$}}_{\mathbf{1},\mathbf{n}}$, one can see that 
\begin{align}
\frac{1}{2}\lambda_{1}^{(2n-1,2n,L,1)}\ \succeq \ \lambda_{1}^{(n,n,L,2)}.\label{1,2 stoch}    
\end{align}

This gives a stochastic comparison between the largest eigenvalues in Laguerre ensemble for $\beta=2$ and $\beta=1$.

Consider a last passage model on $\mathbb{Z}^2$, having i.i.d $\zeta_v\sim$ Exp$(1)$ random weights on vertices, with symmetry across the $x=y$ line and zero weights on the $x=y $ line, that is $\zeta(i,j)=\zeta(j,i)$ and $\zeta(i,i)=0$. Define
\begin{align*}
G^{\scalebox{1.5}{$\boxslash$}}_{\mathbf{1},\mathbf{n}}:=\max\limits_{\gamma}\ell(\gamma),
\end{align*}
where $\gamma$ are the up/right oriented paths in $\mathbb{Z}^2$ from $\mathbf{1}\in\mathbb{Z}^2$ to $\mathbf{n}\in\mathbb{Z}^2$ and $\ell(\gamma)=\sum\limits_{v\in \gamma}\zeta_v$.

Using \eqref{Laguerre density}, we also have the following relation due to Baik \cite{Baik}.

\begin{proposition}[{\cite[Corollary 1.5]{Baik}}]\label{Baik Prop}
\begin{align*}
G^{\scalebox{1.5}{$\boxslash$}}_{\mathbf{1},\mathbf{2n}}\overset{d}{=}  2\lambda_{1}^{(n,n-\frac{1}{2},L,4)}
\end{align*}
\end{proposition}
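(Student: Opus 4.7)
The plan is to deduce the proposition from Baik's \cite{Baik} analysis of symmetric random permutation models, by combining the symmetric RSK correspondence with the exponential limit that Johansson \cite{KJ} used to pass from geometric LPP to the Laguerre unitary ensemble in the non-symmetric setting. Recall that Johansson's \cite{KJ} identity $G^{\scalebox{1.5}{$\Box$}}_{\mathbf{1},\mathbf{n}} \overset{d}{=} \lambda_1(B^t_{n,n,2}B_{n,n,2})$ is proved by applying geometric RSK to an i.i.d.\ weight matrix, identifying the resulting shape as a Schur measure via the Cauchy identity, and then degenerating from geometric to exponential weights (equivalently, from Meixner to Laguerre unitary eigenvalue statistics). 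The proposition should be the analogous statement for the symmetric setup, with the role of the Cauchy identity played by the Littlewood (symmetric Cauchy) identity and the role of the Laguerre unitary ensemble played by the Laguerre symplectic ensemble.

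Concretely, I would apply the symmetric RSK correspondence to the symmetric exponential array $\{\zeta(i,j)\}_{1 \leq i,j \leq 2n}$ (i.i.d.\ Exp$(1)$ weights on the strict upper triangle, reflected to the lower triangle, and zero on the diagonal). Under this bijection, the array maps to a single random partition $\lambda$ whose top part realizes $G^{\scalebox{1.5}{$\boxslash$}}_{\mathbf{1},\mathbf{2n}}$ (possibly up to an overall factor coming from the reflection/symmetrization across the diagonal $x=y$). The joint law of the parts of $\lambda$ is given by the Littlewood identity as a Schur measure on self-conjugate partitions. Degenerating from the geometric/Meixner setting to the exponential/Laguerre one --- exactly as in Johansson's derivation for $\beta=2$ --- and invoking Baik's Pfaffian identification \cite{Baik} of this measure with the $\beta=4$ Laguerre joint density \eqref{Laguerre density} of dimension $n$ and parameter $\kappa = n - \tfrac{1}{2}$, one obtains $G^{\scalebox{1.5}{$\boxslash$}}_{\mathbf{1},\mathbf{2n}} \overset{d}{=} 2\lambda_1(B^t_{n, n-1/2, 4} B_{n, n-1/2, 4})$.

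The main obstacle is the precise matching of parameters --- specifically, the half-integer shape $\kappa = n - \tfrac{1}{2}$ and the overall factor of $2$ on the matrix side. The half-integer arises because the diagonal hook lengths of a self-conjugate Young diagram contribute odd factors in the relevant Schur/Jack identities, and these factors translate under the degeneration into the exponent $\tfrac{\beta}{2}(\kappa-n+1)-1$ in \eqref{Laguerre density} with $\beta=4$ and $\kappa = n-\tfrac{1}{2}$. The factor of $2$ on the matrix side reflects the rescaling that converts Baik's normalization to the standard Dumitriu--Edelman form of the $\beta$-Laguerre ensemble; at the level of first-order asymptotics, it is consistent with the fact that $G^{\scalebox{1.5}{$\boxslash$}}_{\mathbf{1},\mathbf{2n}} \sim 8n$ while $\lambda_1(B^t_{n, n-1/2, 4} B_{n, n-1/2, 4}) \sim (\sqrt{n} + \sqrt{n-1/2})^2 \sim 4n$. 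Tracking all of these constants through the exponential degeneration of Baik's Pfaffian formulas is the bulk of the technical content, and once done the proposition is immediate, as the author indicates.
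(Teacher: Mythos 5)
Your approach is correct in spirit but does far more work than the paper needs. The paper's proof is a two-line argument: it quotes Corollary~1.5 of Baik's paper \cite{Baik} directly, which already expresses the distribution function of $G^{\scalebox{1.5}{$\boxslash$}}_{\mathbf{1},\mathbf{2n}}$ as an integral of $\prod_k e^{-\lambda_k}\prod_{j<k}|\lambda_j-\lambda_k|^4$ over the truncated Weyl chamber, and then applies a single change of variables (scaling by $2$) to recognize that integrand as the $\beta=4$ Laguerre density \eqref{Laguerre density} with $\kappa = n-1/2$. Your proposal, by contrast, re-derives the content of Baik's corollary from scratch — symmetric RSK on a geometric array, Littlewood identity, degeneration to exponential weights, Pfaffian computations — which is essentially how Baik proved that corollary in the first place. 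Both routes lead to the same place, but you are reproducing an internal step of \cite{Baik} that is already available as a readily quotable corollary. Moreover, you explicitly defer the hard part ("tracking all of these constants through the exponential degeneration ... is the bulk of the technical content"), which is exactly the bookkeeping the paper avoids by citing Corollary~1.5; your first-order sanity check ($8n$ on both sides) is correct but does not substitute for this. In short, your plan is sound but unnecessarily roundabout: once you know Baik's Corollary~1.5, the proposition reduces to recognizing the integrand as a Laguerre joint density after the substitution $\lambda_k \mapsto 2\lambda_k$, and the parameter $\kappa = n - 1/2$ pops out from matching the exponent $\frac{\beta}{2}(\kappa-n+1)-1 = 0$ at $\beta = 4$.
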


Using Proposition \ref{Baik Prop} and $G^{\scalebox{1.5}{$\Box$}}_{\mathbf{1},\mathbf{2n}}\succeq G^{\scalebox{1.5}{$\boxslash$}}_{\mathbf{1},\mathbf{2n}}$, one can see that 
\begin{align}
    \frac{1}{2}\lambda_{1}^{(2n,2n,L,2)}\ \succeq \ \lambda_{1}^{(n,n-\frac{1}{2},L,4)}.\label{2,4 stoch}
\end{align}

This gives a stochastic comparison between the largest eigenvalues in Laguerre ensemble for $\beta=2$ and $\beta=4$.
\begin{remark}
    One can see from second part of Theorem \ref{Stoch dom} that \eqref{Laguerre} generalizes \eqref{1,2 stoch} and \eqref{2,4 stoch}. It would be interesting to know if such stochastic domination results in last passage for geometric weights could be obtained from Meixner ensemble (see \cite{KJ}).
\end{remark}

\subsection{Higher order analogues of Tracy Widom distributions:} The stochastic dominance result of \cite{VP22} was obtained using the description of $TW_{\beta}$ as the eigenvalue distribution of SAO.  It is a natural question whether such stochastic domination results hold for higher order analogues of Tracy Widom distributions. Using the Painlev\'e hierarchy, higher order analogues of $TW$ have been defined for $\beta=2$ in \cite{CIK10}. For general $\beta$, similar to SAO, the $k$-th higher order analog of $TW_{\beta}$ is defined as the negative of the smallest eigenvalue of the operator
\begin{align*}
    S_{\beta,k}:=-\frac{d^2}{dx^2}+x^{\frac{1}{2k+1}}+\frac{2}{\sqrt{\beta}}x^{-\frac{k}{2k+1}}b_x'
\end{align*}
on the half-line with Dirichlet conditions at the origin. Such operators are defined rigorously in the work of Krishnapur, Rider and Vir\'ag \cite{KRV13}. The authors in \cite{KRV13} define $TW_{\beta,k}$ as
\begin{align}\label{eq: TW_k}
TW_{\beta,k}:=\sup\limits_{\substack{f\in L^*,\  \lVert f\rVert_2=1}}\frac{2}{\sqrt{\beta}}\int\limits_{0}^{\infty}f^2(x)x^{-\frac{k}{2k+1}}dW_x-\int\limits_{0}^{\infty}\left[(f')^2(x)+x^{\frac{1}{2k+1}}f^2(x)\right]dx,
\end{align}
where $L^*$ is the Hilbert space of continuous $f$ satisfying $f(0)=0$ and $\int\limits_{0}^{\infty}\left[(f')^2(x)+x^{\frac{1}{2k+1}}f^2(x)\right]dx<\infty$. Note that for $k=0$, we get back SAO and $TW_2$. Using this definition and arguing similarly to \cite{VP22}, the following can be shown. 
\begin{align*}
    \mbox{If }\beta_2>\beta_1 \mbox{ and }{\frac{1}{4k+3}}\leq s\leq{\frac{4k+2}{4k+3}},
\mbox{ then } \beta_1^sTW_{\beta_1,k}\succeq\beta_2^s TW_{\beta_2,k}.
\end{align*}

In order to show that the above bound of $s$ is necessary for this stochastic domination result to hold, we need tail bounds for $TW_{\beta,k}$. We shall prove that the definition in \eqref{eq: TW_k} contradicts the tail bounds of $TW_{2,k}$. Hence \eqref{eq: TW_k} cannot be the correct definition of $TW_{\beta,k}$. This allows us to disprove Conjecture $13.1$ of \cite{KRV13} for a special case. To state the conjecture we need the following. 

Consider probability measures with density of the form 
\begin{align}\label{eq: general V}
    \frac{1}{Z_{n,\beta}}\exp{\l(-\beta n\sum_{k=1}^nV(\lambda_k)) \r)}\prod_{j<k}|\lambda_j-\lambda_k|^\beta
\end{align}
on $\R^n$, where $V$ is real analytic on $\R$ with sufficient growth conditions. For $\beta=2$, the limiting mean eigenvalue distribution $\mu_V$ for \eqref{eq: general V} is given as minimizer of a variational problem and has density of the form (see introductory part of \cite{CIK10} or Section $13$ of \cite{KRV13})
\begin{align*}
    \psi_V(x)=\frac{d\mu_V(x)}{dx}=\sqrt{(Q_V(x))_+}
\end{align*}
for a real analytic function $Q_V(x)$. Usually $Q_V(s)$ has simple zeros at the end points of support of $\psi_V(x)$, which gives that $\psi_V(x)$ vanishes as a square root. Such $V$ are called as regular potentials. In general $Q_V$ has a zero of order $4k+1$, with $k=0,1,2,\dots$ at the end point of support. For $k\neq 0$, these $V$ are called irregular potentials.

For each probability measure $\pi$ supported on $n$ points there exists $T$, a unique Jacobi matrix (tridiagonal with positive off-diagonals), such that the spectral measure of $T$ at $e_1$ is $\pi.$ Let $T_n$ be the random tridiagonal matrix corresponding to the random measure obtained such that the eigenvalues are sampled using \eqref{eq: general V} and weights independently sampled from Dirichlet$(\beta/2,\dots,\beta/2)$. It is conjectured in \cite{KRV13} that if $V$ is irregular and $b$ is the right endpoint in support of $\psi_V$, then there exist constant $a$ such that smallest eigenvalue of $an^{2/4k+3}(bI-T_n)$ converges weakly to smallest eigenvalue of $S_{\beta,k}$. In the second main result of the article we disprove a special case of this conjecture.

\begin{theorem}\label{Disprove conjecture result}
    Conjecture $13.1$ of \cite{KRV13} does not hold for $\beta=2$ and $k=1$.
\end{theorem}

In order to prove Theorem \ref{Disprove conjecture result}, we follow the right tail lower bound proof of $TW_{\beta}$ obtained in \cite{RRV11} and modify it for $S_{\beta,k}$. We obtain right tail lower bound for $TW_{\beta,2}$ from \cite{CV07} and show that these bounds contradict each other. We now return to Hermite and Laguerre ensembles.
\subsection{Moments comparison}
It can be seen from the proof method of Theorem \ref{Stoch dom} that such arguments work only for comparing $\lambda_{1}^{(m,H,\beta_1)}$ and $\lambda_{1}^{(n,H,\beta_2)}$ where $\frac{\beta_2}{\beta_1}=\frac{m}{n}$ and does not work for comparing $\lambda_{1}^{(n,H,\beta_1)}$ and $\lambda_{1}^{(n,H,\beta_2)}$. Also the proof method of Theorem \ref{Stoch dom} does not work to stochastically compare other eigenvalues except largest eigenvalue. Due to these reasons, we study $\E_{n,H,\beta}\l[ \sum\limits_{i=1}^n \lambda_i^p \r]$ ($p$-th moments of Hermite ensemble) and $\E_{n,\kappa,L,\beta}\l[ \sum\limits_{i=1}^n \lambda_i^p \r]$ ($p$-th moments of Laguerre ensemble) where $\lambda_i$ are from \eqref{Hermite density} and \eqref{Laguerre density} respectively. Note that these expectations are equal to $\E\l[\mbox{Tr}(T_{n,\beta}^{p})\r]$ and $\E\l[\mbox{Tr}((B^t_{n,\kappa, \beta}B_{n,\kappa, \beta})^{p})\r]$ respectively. For special cases of $\beta=1,2,4$ the moments of Hermite ensemble admit a combinatorial interpretation and are related to certain maps on surfaces of definite genus \cite{HZ86, MW03}. Also the moments of Laguerre $\beta$ ensemble are related to alternating Motzkin paths (see Chapters 5-6 of \cite{Dumitriu03}). As we could not do stochastic comparison of all the eigenvalues, we use these moments as a measure of global eigenvalues ordering and compare these moments for different $\beta$. In this direction, we have the following theorem.

\begin{theorem}\label{thm:Expectation comparison}
    Let $p\in \N$ and $\beta_2>\beta_1$. Then
    \begin{align*}
        \E\l[\mbox{Tr}\l(T_{n,\beta_1}^{p}\r)\r]\geq \E\l[\mbox{Tr}\l(T_{n,\beta_2}^{p}\r)\r] \quad \text{ and }\quad \E\l[\mbox{Tr}\l(\l(B^t_{n,\kappa, \beta_1}B_{n,\kappa, \beta_1}\r)^{p}\r)\r]\geq \E\l[\mbox{Tr}\l(\l(B^t_{n,\kappa, \beta_2}B_{n,\kappa, \beta_2}\r)^{p}\r)\r]
    \end{align*}
    and if $\frac{\beta_2}{\beta_1}=\frac{m}{n}$, then
    \begin{align*}
        \E\l[\mbox{Tr}\l(T_{m,\beta_1}^{p}\r)\r]\geq \l(\frac{\beta_2}{\beta_1}\r)^{p/2} \E\l[\mbox{Tr}\l(T_{n,\beta_2}^{p}\r)\r] \text{ and } \E\l[\mbox{Tr}\l(\l(B^t_{m,m\kappa/n, \beta_1}B_{m,m\kappa/n, \beta_1}\r)^{p}\r)\r]\geq \l(\frac{\beta_2}{\beta_1}\r)^{p}\E\l[\mbox{Tr}\l(\l(B^t_{n,\kappa, \beta_2}B_{n,\kappa, \beta_2}\r)^{p}\r)\r].
    \end{align*}
\end{theorem}

For the special case of $\beta=2$, we denote $a_p(n):=\E\l[\mbox{Tr}\l(T_{n,2}^{2p}\r)\r]$ (odd moments are $0$ due to symmetry) and $c_p(m,n):=\E\l[\mbox{Tr}\l(\l(B^t_{n,m, \beta}B_{n,m,\beta}\r)^{p}\r)\r]$. These moments satisfy the following three term recursions, due to Harer-Zagier \cite{HZ86} and Haagerup-Thorbj\o{}rnsen \cite{HT03}.
\begin{align}
    (p+2)a_{p+1}(n)\ & =\ 2n(2p+1)a_{p}(n)\ +\ p(2p+1)(2p-1)a_{p-1}(n).\label{eq:Harer-Zagier}\\
    (p+2)c_{p+1}(n)\ &=\ (m+n)(2p+1)c_{p}(m,n)\ +\ (p-1)(p^2-(m-n)^2)c_{p-1}(m,n).\label{eq:Haagerup -Thorbjornsen}
\end{align}

Similarly there also exist five term recursions for $\beta=1$ case \cite{Ledoux09, CMOS19}.
The first few Hermite moments are given as,
\begin{align}
    a_0(n)&=n,\nonumber\\
    a_1(n)&=n^2,\nonumber\\
    a_2(n)&=2n^3+n,\nonumber\\
    a_3(n)&=5n^4+10n^2,\nonumber\\
    a_4(n)&=14n^5+70n^3+21n.\label{eq:Hermite moments}
\end{align}

The first few Laguerre moments are given as,
\begin{align}
    c_0(m,n)&=n,\nonumber\\
    c_1(m,n)&=mn,\nonumber\\
    c_2(m,n)&=m^2n+mn^2,\nonumber\\
    c_3(m,n)&=m^3n+3m^2n^2+mn^3+mn,\nonumber\\
c_4(m,n)&=m^4n+6m^3n^2+6m^2n^3+mn^4+5m^2n+5mn^2.\label{eq:Laguerre moments}
\end{align}

Note that as the celebrated Wigner's semicircle law and Marchenko-Pastur law are the limiting distributions of Hermite and Laguerre ensembles, it is a natural question to compare $\E_{n,H,\beta}\l[ \sum\limits_{i=1}^n \lambda_i^{2p} \r]$ and $\E_{n,n,L,\beta}\l[ \sum\limits_{i=1}^n \lambda_i^{p} \r]$. Note that for $\beta=2$, using \eqref{eq:Harer-Zagier} and \eqref{eq:Haagerup -Thorbjornsen}, it is immediate that $\E_{n,H,2}\l[ \sum\limits_{i=1}^n \lambda_i^{2p} \r]\geq \E_{n,n,L,2}\l[ \sum\limits_{i=1}^n \lambda_i^{p} \r]$. Similarly comparing first few moments for $\beta=1$ using the five term recursions mentioned above, we make the following conjecture.
\begin{conjecture}
For any $\beta>0$ and $p\in \N$, we have $\E_{n,H,\beta}\l[ \sum\limits_{i=1}^n \lambda_i^{2p} \r]\geq \E_{n,n,L,\beta}\l[ \sum\limits_{i=1}^n \lambda_i^{p} \r]$.
\end{conjecture}

\section{Proof of Theorem \ref{Stoch dom}}\label{Sec 2}
\begin{proof}[Proof of Theorem \ref{Stoch dom}]
As $s=2/3$ and $m\beta_1=n\beta_2$, to prove \eqref{Hermite} it is enough to prove $\lambda_1\l(\sqrt{\beta_1}T_{m,\beta_1}\r)  \succeq \  \lambda_1\l(\sqrt{\beta_2}T_{n,\beta_2}\r)$.

As the largest eigenvalue of $\sqrt{\beta_1}T_{m,\beta_1}$ is at least as large as that of its top principal submatrices, we would be done if we show that the largest eigenvalue of top $n\times n$ submatrix of $\sqrt{\beta_1}T_{m,\beta_1}$ stochastically dominates $\lambda_1\l(\sqrt{\beta_2}T_{n,\beta_2}\r)$.
\begin{align*}
\centering
 A_{n}=\l[\sqrt{\beta_1}T_{m,\beta_1}\r]_{n\times n}:=&\begin{bmatrix}
X_1 & \nu_{m\beta_1-\beta_1} & 0 & 0 &\cdots 0\\
\nu_{m\beta_1-\beta_1} & X_2 & \nu_{m\beta_1-2\beta_1} & 0 &\cdots 0\\
0 & \nu_{m\beta_1-2\beta_1} & X_3 & \nu_{m\beta_1-3\beta_1} & \cdots 0\\
\vdots & \vdots & \vdots &\vdots & \vdots\\
\vdots & \vdots & \vdots &\nu_{m\beta_1-(n-1)}\beta_1 & X_n
\end{bmatrix}_{n\times n}\\
\centering B_n=\sqrt{\beta_2}T_{n,\beta_2}:=&\begin{bmatrix}
X_1 & \nu'_{n\beta_2-\beta_2} & 0 & 0 &\cdots 0\\
\nu'_{n\beta_2-\beta_2} & X_2 & \nu'_{n\beta_2-2\beta_2} & 0 &\cdots 0\\
0 & \nu'_{n\beta_2-2\beta_2} & X_3 & \nu'_{n\beta_2-3\beta_2} & \cdots 0\\
\vdots & \vdots & \vdots &\vdots & \vdots\\
\vdots & \vdots & \vdots &\nu'_{\beta_2} & X_n
\end{bmatrix}_{n\times n}
\end{align*}
Consider the above matrices where $A_n$ is the top $n\times n$ submatrix of $\sqrt{\beta_1}T_{m,\beta_1}$. In the sub-diagonal and super-diagonal entries the chi-distributed random variables are written using symbols $\nu,\nu'$ instead of $\chi$. Note that same normal random variables are used in the diagonals in both matrices. The matrices $A_n$ and $B_n$ are coupled such that $A_n(i,i+1)\geq B_n(i,i+1), \forall i\in [n]$, almost surely. As $m\beta_1=n\beta_2$, along with $\beta_1<\beta_2$ and $\chi_{a} \succeq \ \chi_{b}$ if $a>b$, such a coupling exists.

As both the matrices are tridiagonal and off-diagonal entries are positive, the eigenvector corresponding to the largest eigenvalue have all co-ordinates non-negative. Since $A_n-B_n$ has non-negative entries (almost surely), if $x\in \mathbb{R}^n$ has all entries non-negative then $x^tA_nx\geq x^tB_nx$. In particular, if $x\in\mathbb{R}^n$ is the eigenvector corresponding to the largest eigenvalue of $B_n$, we have $\lambda_1(A_n){\geq}\lambda_1(B_n)$ (almost surely).

We now prove \eqref{Laguerre}. We have that
 \begin{align*}
\beta_1^{2/3}\left(\sqrt{\frac{m^2\kappa}{n}}\right)^{1/3}\left(\sqrt{\frac{m\kappa}{n}}+\sqrt{m}\right)^{2/3}&=\beta_2^{2/3}\left(\sqrt{\kappa n}\right)^{1/3}\left(\sqrt{\kappa}+\sqrt{n}\right)^{2/3}\\
\beta_1\ \left(\sqrt{\frac{m\kappa}{n}}+\sqrt{m}\right)^{2}&=\beta_2\ \left(\sqrt{\kappa}+\sqrt{n}\right)^{2}.
\end{align*} 

The above equations show that to prove \eqref{Laguerre}, it is enough to show 
\begin{align*}
\beta_1\lambda_1(B^t_{m,m\kappa/n,\beta_1}B_{m,m\kappa/n,\beta_1})  \succeq \  \beta_2\lambda_1(B^t_{n,\kappa,\beta_2}B_{n,\kappa,\beta_2}).
\end{align*}
As $\beta_1(\frac{m\kappa}{n}-i+1)>\beta_2({\kappa}-i+1)$ and $\beta_1(m-i)>\beta_2(n-i)$ we can couple the entries of $B_{m,m\kappa/n,\beta_1}$ and $B_{n,\kappa,\beta_2}$ such that each entry of $B_{n,\kappa,\beta_2}$ is smaller than the corresponding entry in the top $n\times n$ sub-matrix of $B_{m,m\kappa/n,\beta_1}$. For the rest of the proof following the same argument as in the proof of \eqref{Hermite} gives us \eqref{Laguerre}.

\end{proof}

\begin{proof}[Proof of Corollary \ref{Pedreira result}]
    Letting $n\rightarrow\infty$ and using the property that $TW_{\beta_n}\rightarrow TW_{\beta}$ weakly if $\beta_n\rightarrow\beta$, Theorem \ref{Stoch dom} gives that if $\beta_2>\beta_1$ then
    \begin{align*}
     \beta_1^{2/3}\ TW_{\beta_1}\ \succeq\ \ \beta_2^{2/3}\ TW_{\beta_2}.
    \end{align*}
    This gives Theorem $1$ of \cite{VP22} for $\alpha=\left(\frac{\beta'}{\beta}\right)^{2/3}$. Note that it is enough to check Theorem $1$ of \cite{VP22} for $\alpha=\left(\frac{\beta'}{\beta}\right)^{1/3}$ and $\left(\frac{\beta'}{\beta}\right)^{2/3}$.
\end{proof}

\begin{proof}[Proof of Theorem \ref{thm: TW neg means}]
 We make use of the following result from \cite{AD13} that deformed $TW_{\beta}$ converges to Gumbel distribution as $\beta\rightarrow 0$. 

\begin{theorem}[Theorem $4.2$ of \cite{AD13}]\label{thm: Gumble conv}
When $\beta\rightarrow 0$, the following convergence in law holds
\begin{align}
    X_{\beta}:=2.3^{1/3}\l(\ln\frac{1}{\beta}\r)^{1/3}\left[\l(\frac{\beta}{4}\r)^{2/3}TW_{\beta}-\l(\frac{3}{8}\r)^{2/3}\l(\ln\frac{1}{\pi\beta}\r)^{2/3}\right]\implies e^{-x}\exp(-e^{-x})dx
\end{align}
\end{theorem}

Note that the mean $\mu$ of the standard Gumbel distribution is the Euler-Mascheroni constant ($\gamma$), that is $\mu=\gamma\approx 0.5772$. By the above result one would guess that the moments also converge and as a result for $\beta$ small enough, $\mathbb{E}[TW_{\beta}]>0$. But the weak convergence does not necessarily imply convergence of moments. Also to get the convergence of $k$-th moments, it is rather difficult to bound the $(k+\epsilon)$ moments of $X_{\beta}$. Here we use the log-concavity of $X_{\beta}$ to conclude the convergence of all integer moments. By  \cite[Corollary $6$]{MM14}, we have that moments of log-concave measures converge to the corresponding moments of the limiting distribution, which is also log-concave. 

By \cite[Corollary $4$]{BKM24}, we have that for all $\beta>0$, the  $TW_{\beta}$ distributions are log-concave. As log-concavity is preserved under centering, scaling and weak limits, we have that the distributions $X_{\beta}$ and Gumbel distribution are log-concave. As a result we have that $\mathbb{E}[X_{\beta}]\rightarrow \gamma$ as $\beta\rightarrow 0$. It follows from that for some $0<a<1$, we have $\mathbb{E}[TW_{a}]>0.$ By Corollary \ref{Pedreira result}, for all $\beta<a$, we have $\mathbb{E}[X_{\beta}]>0$.

In random interface growth models literature \cite{QR19} stochastic domination of Baik-Rains distribution ($BR$) over $TW_{1}$ and $TW_{2}$ is known. It is also known that $\mathbb{E}[BR]=0$ \cite[Proposition 2.1]{BR00}. This implies $\mathbb{E}[TW_1]<0$. This was shown in \cite[Lemma A.4]{BGHH22}. By Corollary \ref{Pedreira result}, we also have that $\mathbb{E}[TW_{\beta}]<0$ for all $\beta\geq 1$. 

\end{proof}

\section{Proof of Theorem \ref{Disprove conjecture result}}
\begin{proof}[Proof of Theorem \ref{Disprove conjecture result}]

 We describe the following alternate definition of higher order analogue of Tracy Widom. Building on the Riemann-Hilbert analysis of the limiting kernel in \cite{CV07}, Claeys, Its and Krasovsky \cite{CIK10} use Fredholm determinant theory to describe higher order analogues of $TW_{2}$. For $\beta=2$, we obtain a determinantal point process with kernel $K_{n,V}$ when points are distributed with density \eqref{eq: general V} \cite{Deift00}.
 
 It is shown in \cite{CV07} that if $V$ is a polynomial such that $\psi_V\sim (b-x)^{5/2}$ at the right end of support of $\psi_V$, then there exists a limiting kernel $K^{(1)}(u,v)$ and a constant $c=c(V)$ such that
 \begin{align*}
\lim_{n\rightarrow\infty}\frac{1}{cn^{2/7}}K_{n,V}\l(b+\frac{u}{cn^{2/7}},b+\frac{v}{cn^{2/7}}\r)=K^{(1)}(u,v)
 \end{align*}
uniformly for $u$ and $v$ in compact subsets of the real line. Since both sides of this equation tend to $0$ rapidly as $u$ or $v$ tends to $+\infty$, this result can be extended to an uniform statement for $u,v>-L$ for a large $L$. This implies (see Equation $1.19$ of \cite{CS12})
\begin{align*}
\lim_{n\rightarrow\infty}\P\l(cn^{2/7}\l(\lambda_1-b\r)\leq s\r)= \det \l(I-K_s^{(1)}\r)
\end{align*}
where $K_s^{(1)}$ is the trace class operator with kernel $K^{(1)}$ on $(s,\infty).$  

Although no proofs have been given for $k>1$, it has been conjectured that (see Equation $1.13$ and $1.15$ of \cite{CIK10}) there exist limiting kernels $K^{(k)}(t_0,\dots,t_{2k-1})$ such that,
\begin{align*}
\lim_{n\rightarrow\infty}\frac{1}{cn^{2/4k+3}}K_{n,V}\l(b+\frac{u}{cn^{2/4k+3}},b+\frac{v}{cn^{2/4k+3}}\r)&=K^{(k)}(u,v; t_0,\dots, t_{2k-1}),\\
\lim_{n\rightarrow\infty}\P\l(cn^{2/4k+3}\l(\lambda_1-b\r)\leq s\r)&= \det \l(I-K_s^{(k)}(t_0,\dots,t_{2k-1})\r).
 \end{align*}
Here $K_s^{(k)}$ are certain kernels which are generalizations of Airy kernel, as given in \cite{CIK10}. 

 For a few regular potential $V$, it is shown that there exist constants $b,c$ such that 
\begin{align*}
\lim_{n\rightarrow\infty}\P\l(cn^{2/3}\l(\lambda_{1}-b\r)\leq s\r)=\P(TW_2\leq s)=\det \l(I-K_s^{(0)}\r),
\end{align*}
where $K_s^{(0)}$ is the Airy kernel trace class operator acting on $L^2(s,\infty)$. Higher order analogues of $TW_2$ denoted as $TW_{2,k} $ (for $k=0$ we get $TW_2$ distribution) are defined as (see \cite{CIK10})
\begin{align*}
\lim_{n\rightarrow\infty}\P\l(cn^{2/(4k+3)}\l(\lambda_{1}-b\r)\leq s\r)=\P(TW_{2,k}\leq s)=\det \l(I-K_s^{(k)}\r).
\end{align*}

It is also known that (Equation $1.24$ of \cite{CIK10})
\begin{align*}
    \ln \det \l(I-K_s^{(k)}\r)=O\l(\exp\l(-cs^{\frac{4k+3}{2}}\r)\r),
\end{align*}
for some constant $c>0$. This implies 
\begin{align}\label{eq: right tail upper bound}
\lim_{n\rightarrow\infty}\P\l(cn^{2/7}\l(\lambda_1-b\r)\geq s\r)= O\l(\exp\l(-cs^{7/2}\r)\r).
\end{align}

We now obtain lower bound for the right tail of $-\Lambda_0$ for the operator $S_{\beta,k}$. For any $a>0$ and $g$ satisfying $g(0)=0$ and $\int\limits_{0}^{\infty}\left[(g')^2(x)+x^{\frac{1}{2k+1}}g^2(x)\right]dx<\infty$., we have that \begin{align}
  \P\l(-\Lambda_0>a\r)&=\P\l(\Lambda_0<-a\r),\nonumber\\
  &\geq \P\l(\langle g,S_{\beta,k}g\rangle\leq -a\langle g,g\rangle\r),\nonumber\\ \label{eq: TW_{2,k} tail bound}
  &=\P\l(\frac{2}{\sqrt{\beta}}\sqrt{\int_{0}^{\infty}x^{-\frac{2k}{2k+1}}g^4(x)dx}\ Z\leq -a\int_0^{\infty}g^2(x)dx-\int_0^{\infty}\l(g'(x)\r)^2dx-\int_0^{\infty}x^{\frac{1}{2k+1}}g^2(x)dx\r).
\end{align}
Here $Z$ is a standard normal random variable. Here we used the fact that for any deterministic $f$, we have $\int\limits_0^{\infty}fdW_x\sim \mbox{N}(0,\lVert f\rVert_2^2)$. 

Fix any function $f$ which is smooth and compactly supported on an interval away from $0$. We write $f_a(x)=f(\sqrt{a}x)$. We define
\begin{align}\label{eq: integral equalities}
   \frac{A}{\sqrt{a}}:= \int_0^{\infty}f_a^2(x)dx&=\frac{1}{\sqrt{a}}\int_0^{\infty}f^2(x)dx,\\
    B\sqrt{a}:=\int_0^{\infty}(f'_a(x))^2dx&={\sqrt{a}}\int_0^{\infty}(f'(x))^2dx,\\
    \frac{C}{{a}^{\frac{k+1}{2k+1}}}:=\int_0^{\infty}x^{\frac{1}{2k+1}}f_a^2(x)dx&=\frac{1}{{a}^{\frac{k+1}{2k+1}}}\int_0^{\infty}x^{\frac{1}{2k+1}}f^2(x)dx,\\
  \frac{D}{{a}^{\frac{1}{4k+2}}}:=\int_0^{\infty}x^{-\frac{2k}{2k+1}}f_a^4(x)dx&=\frac{1}{{a}^{\frac{1}{4k+2}}}\int_0^{\infty}x^{-\frac{2k}{2k+1}}f^4(x)dx .
\end{align}

Using $g=f_a(x)$ for $a$ in \eqref{eq: TW_{2,k} tail bound} and using above equations we get
\begin{align*}
    \P\l(\frac{2}{\sqrt{\beta}}\frac{\sqrt{D}}{\sqrt{a}^{\frac{1}{4k+2}}}Z\geq \sqrt{a}\l(A+B+\frac{C}{a^{\frac{4k+3}{4k+2}}}\r)\r)\geq \exp\l(-\l(\frac{A+B+o(1)C}{D}\r)^2\frac{\beta}{8}a^{\frac{4k+3}{4k+2}}\r).
\end{align*}

Hence there exists a constant $c_{\beta}>0$ such that for all large enough $a$
\begin{align}\label{eq: right tail lower bound for eigenvalue}
    \P\l(-\Lambda_0>a\r)\geq \exp\l(-c_{\beta}a^{\frac{4k+3}{4k+2}}\r).
\end{align}

Note that if we take $-\Lambda_0$ to be the limiting distribution of $cn^{2/7}\l(\lambda_1-b\r)$ for $\beta=2$ and $k=1$, then \eqref{eq: right tail upper bound} and \eqref{eq: right tail lower bound for eigenvalue} contradict each other. Hence $cn^{2/7}\l(\lambda_1-b\r)$ does not converge to $-\Lambda_0$ and this completes the proof of Theorem \ref{Disprove conjecture result}.
\end{proof}

\begin{remark}
    It also follows from a heuristic argument (see Section $5$ of \cite{MS14}) that for $k>1$ 
    \begin{align*}
        \lim_{n\rightarrow\infty}\P\l(cn^{2/(4k+3)}\l(\lambda_{1}-b\r)\geq a\r)= O\l(\exp\l(-c_ka^{\frac{4k+3}{2}}\r)\r).
    \end{align*}
    This along with \eqref{eq: right tail lower bound for eigenvalue} suggests that Conjecture $13.1$ of \cite{KRV13} should not be true in general. 
\end{remark}

\section{Proof of Theorem \ref{thm:Expectation comparison}}
\begin{proof}[Proof of Theorem \ref{thm:Expectation comparison}]
For odd $p$ we have $\E\l[\mbox{Tr}\l(T_{n,\beta}^{p}\r)\r]=0$. For even $p$, we can write 
\begin{align}\label{eq: moment as sum}
\E\l[\mbox{Tr}\l(T_{n,\beta}^{p}\r)\r]=\frac{1}{\beta^{p/2}}\sum_{i}\sum_{P_i}\prod_{j=1}^{n}\E\l[X_j^{f(P_i,j)}\r]\prod_{k=1}^{n-1}\E\l[Y_k^{g(P_i,k)}\r].
\end{align}
Here
 $P_i$ is a closed walk of length $p$ starting and ending at vertex $i$ on the graph $G$ as shown in the Figure \ref{Fig: 1} and traversing each loop even number of times.

 \begin{figure}[h!]
 \centering
\begin{tikzpicture}
  
\Vertex[label=$1$,x=0,y=0]{A}
\Vertex[label=$2$,x=2,y=0]{B}
  \Vertex[label=$n-1$,x=6,y=0]{C}
  \Vertex[label=$n$,x=8,y=0]{D}

  \Edge(A)(B)
  \draw[dashed, ultra thick] (B) -- (C);
  \Edge(C)(D)

  \draw (A) to [out=45, in=135, looseness=10] (A);
  \draw (B) to [out=45, in=135, looseness=10] (B);
  \draw (C) to [out=45, in=135, looseness=10] (C);
  \draw (D) to [out=45, in=135, looseness=10] (D);
\end{tikzpicture}
\caption{Graph $G$}
\label{Fig: 1}
\end{figure}
The function $f(P_i,j)$ counts the number of times the loop at vertex $j$ is covered by the path $P_i$ and function $g(P_i,k)$ counts the number of times the path $P_i$ travels the edge connecting $k$ and $k+1$. Note that $\frac{\E\l[Y_k^{2m}\r]}{\beta_1^{m}}$ in $T_{n,\beta_1}$ is greater than $\frac{\E\l[Y_k^{2m}\r]}{\beta_2^{m}}$ in $T_{n,\beta_2}$ for all $m\in\N$ and $k\in[n-1]$. We can compare $\E\l[\mbox{Tr}\l(T_{n,\beta_1}^{p}\r)\r]$ and $ \E\l[\mbox{Tr}\l(T_{n,\beta_2}^{p}\r)\r]$ termwise using \eqref{eq: moment as sum}. It follows that 
\begin{align*}
    \E\l[\mbox{Tr}\l(T_{n,\beta_1}^{p}\r)\r]\geq \E\l[\mbox{Tr}\l(T_{n,\beta_2}^{p}\r)\r].
\end{align*}
Using the fact that $B^t_{n,\kappa, \beta}B_{n,\kappa, \beta}$ is also a tridiagonal matrix and following essentially the same argument as above it follows that
\begin{align*}
    \E\l[\mbox{Tr}\l(\l(B^t_{n,\kappa, \beta_1}B_{n,\kappa, \beta_1}\r)^{p}\r)\r]\geq \E\l[\mbox{Tr}\l(\l(B^t_{n,\kappa, \beta_2}B_{n,\kappa, \beta_2}\r)^{p}\r)\r].
\end{align*}

Now let $\frac{\beta_2}{\beta_1}=\frac{m}{n}$ and $p$ be even. As all the moments of all entries of $T_{m,\beta}$ are non-negative, it follows that if $[T_{m,\beta_1}]_n$ is the top $n\times n$ sub-matrix of $T_{m,\beta_1}$ then $\E\l[\mbox{Tr}\l(T_{m,\beta_1}^{p}\r)\r]\geq \E\l[\mbox{Tr}\l([T_{m,\beta_1}]_n^{p}\r)\r]$. As shown in the proof of Theorem \ref{Stoch dom}, each entry of $[T_{m,\beta_1}]_n$ stochastically dominates corresponding entry of $\l(\frac{\beta_2}{\beta_1}\r)^{1/2}T_{n,\beta_2}$. As a result, using \eqref{eq: moment as sum} and comparing termwise gives us,
\begin{align*}
    \E\l[\mbox{Tr}\l(T_{m,\beta_1}^{p}\r)\r]\geq \l(\frac{\beta_2}{\beta_1}\r)^{p/2} \E\l[\mbox{Tr}\l(T_{n,\beta_2}^{p}\r)\r].
\end{align*}  
Arguing similarly for $B^t_{n,\kappa, \beta}B_{n,\kappa, \beta}$ it follows that
\begin{align*}
     \E\l[\mbox{Tr}\l(\l(B^t_{m,m\kappa/n, \beta_1}B_{m,m\kappa/n, \beta_1}\r)^{p}\r)\r]\geq \l(\frac{\beta_2}{\beta_1}\r)^{p}\E\l[\mbox{Tr}\l(\l(B^t_{n,\kappa, \beta_2}B_{n,\kappa, \beta_2}\r)^{p}\r)\r].
\end{align*}  
\end{proof}

\textbf{Acknowledgement:} The author would like to thank Riddhipratim Basu and Manjunath Krishnapur for many helpful discussions and suggestions.

 \bibliography{bibliography}
 \bibliographystyle{plain}

\end{document}